\numberwithin{equation}{section}
\newtheorem{prop}{Proposition}
\newtheorem{lemma}[prop]{Lemma}
\newtheorem{thm}[prop]{Theorem}
\numberwithin{prop}{section}
\theoremstyle{definition}
\newcommand{\del}{\partial}
\newcommand{\delb}{\bar{\partial}}\newcommand{\dt}{\frac{\partial}{\partial t}}
\newcommand{\brs}[1]{\left| #1 \right|}
\newcommand{\gD}{\Delta}
\newcommand{\gd}{\delta}
\newcommand{\gU}{\Upsilon}
\newcommand{\gw}{\omega}
\newcommand{\ga}{\alpha}
\newcommand{\gb}{\beta}
\newcommand{\N}{\nabla}
\renewcommand{\bar}[1]{\overline{#1}}
\renewcommand{\i}{\sqrt{-1}}
\newcommand{\bga}{\bar{\ga}}
\newcommand{\bj}{\bar{j}}
\newcommand{\bl}{\bar{l}}
\newcommand{\bq}{\bar{q}}
\newcommand{\IP}[1]{\left<#1\right>}
\DeclareMathOperator{\tr}{tr}
\DeclareMathOperator{\Area}{Area}
\begin{document}

\title[Complex manifolds with negative curvature operator]{Complex manifolds with negative curvature operator}

\begin{abstract} We prove that compact complex manifolds with admitting metrics with negative Chern curvature operator either admit a $d d^c$-exact positive $(1,1)$ current, or are K\"ahler with ample canonical bundle.  In the case of complex surfaces we obtain a complete classification.  The proofs rely on a global existence and convergence result for the pluriclosed flow.
\end{abstract}

\date{\today}

\author{Man-Chun Lee}
\address{Mathematics Department\\
         University of British Columbia\\
         Vancouver, BC V6T 1Z2}
\email{\href{mailto:mclee@math.ubc.ca}{mclee@math.ubc.ca}}

\author{Jeffrey Streets}
\address{Rowland Hall\\
         University of California\\
         Irvine, CA 92617}
\email{\href{mailto:jstreets@uci.edu}{jstreets@uci.edu}}

\maketitle

\section{Introduction}

 Since the original work of Hamilton classifying three-manifolds with positive Ricci curvature \cite{Hamilton3folds}, the Ricci flow has found many applications in understanding the topology of manifolds satisfying certain curvature bounds.  Examples include the proof of the $\tfrac{1}{4}$-pinching differential sphere theorem \cite{BrendleSchoen}, the classification of manifolds with positive curvature operator \cite{Hamilton4folds, BohmWilking}, and others \cite{Margerin, NiWilking}.  In the setting of complex geometry, the K\"ahler-Ricci flow has been used to give a proof of the Frankel Conjecture \cite{ChenSunTian}, classifying K\"ahler manifolds with positive bisectional curvature operator (cf. also \cite{GuFrankel, MokFrankel}).  Some recent works have used the Hermitian Curvature Flow, an extension of the K\"ahler-Ricci flow to the setting of complex, non-K\"ahler geometry introduced by the second author and Tian \cite{HCF}, to prove new classification results for complex manifolds admitting Hermitian metrics satisfying certain curvature conditions.  For instance Ustinovskiy \cite{Ustinovskiy} gave an extension of the Frankel Conjecture, proving that complex manifolds admitting Hermitian metrics with weakly positive bisectional curvatures, positive at one point, are biholomorphic to projective space.  

While the results mentioned above focus on positivity conditions for curvature, various notions of negative curvature have appeared in complex geometry.  Inspired by ideas from Kobayashi hyperbolicity, Yau conjectured that if a projective manifold admits a K\"ahler metric with negative holomorphic sectional curvature then the canonical bundle of the manifold is ample. Very recently, Wu and Yau \cite{wuyau2016} had confirmed the conjecture. Shortly after, the assumption on projectivity was removed by Tosatti and Yang \cite{TosattiYang2017}. Later on, Diverio and Trapani \cite{DiverioTrapani2019} further weakened the assumptions to quasi-negative holomorphic sectional curvature. See \cite{WuYau2016-1} for a independent proof by Wu and Yau and also \cite{guenancia2018, HeierLuWong2016, HeierLuWongZheng2018,yang2019real} for related works.   Also the first author used Hermitian curvature flows to show that the canonical line bundle of a compact Hermitian manifold with nonpositive bisectional curvature and quasi-negative Chern-Ricci curvature is ample \cite{LeeNeg}.

Our main result classifies compact complex surfaces admitting a Hermitian metric with negative Chern curvature operator.  Specifically, given $(M^{2n}, J, h)$ a Hermitian manifold, let $\Omega^h$ denote the curvature tensor of the associated Chern connection.  We say that $h$ has non-positive (resp. negative) curvature operator if there exists $\gd \geq 0$ (resp $\gd > 0$) so that for any section $\xi \in \Lambda^{1,1} T$, one has
\begin{align} \label{f:curvcond}
\xi^{\bj i} \xi^{\bl k} \Omega^h_{i \bj k \bl} \leq - \gd \brs{\xi}_h^2.
\end{align}
In fact, our results will hold in slightly greater generality.  We only require that the inequality (\ref{f:curvcond}) hold on the cone of nonnegative $(1,1)$ tensors $\xi$.  This condition appeared in the recent work by Yang and Zheng \cite{yang2019real} where it is referred to as ``negative real bisectional curvature,'' and is quite natural from the point of view of the Schwarz Lemma.  When the metric $h$ is K\"ahler, this condition is equivalent to negative holomorphic sectional curvature.  In general it is slightly stronger than negative holomorphic sectional curvature (cf. \cite{yang2019real}), though it is significantly weaker than the condition of negative \emph{holomorphic bisectional curvature}, which automatically implies that the underlying complex manifold is K\"ahler.  Our classification follows from a general structural result yielding information on the underlying complex geometry obtained from the existence of a metric with nonpositive curvature operator:

\begin{thm} \label{t:mainthm} Let $(M^{2n}, J, h)$ be a compact Hermitian manifold with nonpositive curvature operator.  Then one of the following must hold:
\begin{enumerate}
\item $(M^{2n}, J)$ admits a nonzero positive $(1,1)$ current which is $d d^c$-exact.
\item For any curve $C$ in $M$, one has $K_M \cdot C \geq 0$.  In the case of negative curvature operator, $(M^{2n}, J)$ is K\"ahler, with ample canonical bundle $K_M$.
\end{enumerate}
In the case of complex surfaces $(n=2)$, case \emph{(2)} must hold.
\end{thm}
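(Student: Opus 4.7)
As the abstract indicates, the approach is via the pluriclosed flow of Streets--Tian. First, the ``$K_M \cdot C \geq 0$'' part of alternative (2) should follow directly from the curvature assumption without the flow: restricting (\ref{f:curvcond}) to rank-one $\xi^{\bj i} = v^i \bar v^j$ yields negative holomorphic sectional curvature, and trace arguments in the spirit of Yang--Zheng, combined with a Schwarz-lemma integration over any holomorphic curve $C$, give $K_M \cdot C \geq 0$ via the pairing with a suitable Chern--Ricci form representing $-c_1(K_M)$. The remaining harder content of (2) -- K\"ahlerness and ampleness in the strict case -- and the dichotomy with (1) are the content of the flow argument.

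The plan is to run the pluriclosed flow
\[
\dt \omega = -(\rho^{1,1})(\omega) + Q(\omega)
\]
starting from a pluriclosed representative $\omega_0$ of the Aeppli class of $h$; this flow preserves the pluriclosed condition and the Aeppli class. The negative curvature operator condition, or at least its essential consequences, must be propagated along the flow by a maximum principle analysis of the evolution of $\Omega$ and its contractions; this is the analytic core. Two scenarios then arise: either the suitably normalized flow admits uniform $C^\infty$ bounds and subsequentially converges to $\omega_\infty$ satisfying $\Rc(\omega_\infty) = -\omega_\infty$, forcing $(M,J)$ to be K\"ahler--Einstein with ample $K_M$; or the flow degenerates and a rescaled weak limit produces a nonzero positive $(1,1)$-current. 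Because the Aeppli class of $\omega_t$ is preserved but must ``disappear'' in the degenerating limit, the resulting current is $\del\delb$-exact, producing alternative (1).

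For the surface case $n = 2$, alternative (1) cannot occur. Indeed, on a compact complex surface the existence of a nonzero $\del\delb$-exact positive $(1,1)$-current is an obstruction to K\"ahlerness (Harvey--Lawson), so (1) would force $(M,J)$ non-K\"ahler. Combined with the nefness of $K_M$ established in the first step and the Enriques--Kodaira classification of non-K\"ahler surfaces (classes $\seven$, Hopf, Inoue, Kodaira, non-K\"ahler properly elliptic), this is incompatible with the existence of a Hermitian metric of nonpositive Chern curvature operator; Streets' existing pluriclosed-flow convergence theory on surfaces provides the sharpened analytic input to close the argument.

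The principal obstacle is the analytic content of the second stage: establishing long-time existence, uniform regularity, and a precise description of blow-up limits for pluriclosed flow under only a sign condition on curvature. Pluriclosed flow is substantially more delicate than K\"ahler--Ricci flow -- the lower-order terms $Q$ and the non-strictly-parabolic nature of the system for $g$ obstruct straightforward application of Hamilton-style curvature preservation -- and propagating the $(1,1)$-curvature-operator inequality along the flow, or deriving a weaker but adequate substitute that still detects either convergence or a $\del\delb$-exact limiting current, is where the technical heavy lifting must live.
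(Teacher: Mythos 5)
Your proposal correctly identifies pluriclosed flow as the tool, but the architecture you describe diverges from the paper's in ways that leave genuine gaps. The most important one is how the negative curvature hypothesis enters. You propose to propagate the curvature-operator condition along the flow by a maximum principle, and you rightly flag this as the hard part --- no such preservation result is known for pluriclosed flow. The paper avoids this entirely: it starts the flow from an \emph{arbitrary} pluriclosed metric (not from a pluriclosed representative of the Aeppli class of $h$; the metric $h$ need not be pluriclosed at all) and uses $h$ only as a fixed background in a Chern--Lu/Schwarz-lemma computation. The evolution of $\tr_g h$ contains the term $\Omega^h(g^{-1},g^{-1})$, which the hypothesis (\ref{f:curvcond}) applied to the nonnegative tensor $g^{-1}$ bounds by $-\gd \brs{g^{-1}}_h^2 \leq -\tfrac{\gd}{n}(\tr_g h)^2$; the maximum principle then gives an upper bound on $\tr_g h$, hence a lower bound on $g_t$, hence long-time existence for every initial pluriclosed metric. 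The inequality $K_M\cdot C \geq 0$ then falls out of $\tfrac{d}{dt}\Area_{g_t}(C) = K\cdot C - \Area_{g_t}(C)$ together with positivity of the area for all time, so your claim that this part follows ``directly without the flow'' by integrating a Schwarz lemma over $C$ is both unnecessary and, as stated, unjustified: negative holomorphic sectional curvature controls the curvature of the induced metric on $C$, not the degree of $K_M|_C$.

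Second, the dichotomy between (1) and (2) is not a convergence-versus-degeneration dichotomy for the flow; it is the Hahn--Banach duality of Egidi, by which a compact complex manifold either carries a pluriclosed metric or carries a nonzero $dd^c$-exact positive $(1,1)$-current. If (1) fails, a pluriclosed metric exists and the flow runs for all time as above, so there is no degenerating case to analyze and no need to produce a current from a blow-up limit. Third, in the strict case the paper does not establish smooth convergence to a K\"ahler--Einstein metric, which would be far stronger than what is needed: it proves exponential decay of the potential one-form $\gb$, extracts a weak subsequential limit that is a closed positive $(1,1)$-current, and then invokes Demailly--Paun (Fujiki class $\mathcal C$), Chiose (Fujiki class $\mathcal C$ plus pluriclosed implies K\"ahler), and the Wu--Yau/Yang--Zheng theorem for ampleness of $K_M$. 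Finally, the surface case is immediate rather than classificatory: for $n=2$ a Gauduchon metric satisfies $\del\delb\gw = 0$, so pluriclosed metrics always exist and case (1) can never occur; no appeal to the Enriques--Kodaira classification of non-K\"ahler surfaces is needed.
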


\noindent As remarked above, this theorem holds when $h$ only has negative real bisectional curvature. As a consequence, we partially confirm (\cite{yang2019real} Conjecture 1.6 (d)) when the complex dimension $n=2$. Namely, when $(M^4,J,h)$ is a compact complex surface with negative real bisectional curvature, then the canonical line bundle $K_M$ is ample.

The proof of Theorem \ref{t:mainthm} uses a special version of Hermitian curvature flow, namely the pluriclosed flow \cite{PCF}.  Condition (1) will hold if there is no pluriclosed metric on $(M^{2n}, J)$ by a result of Egidi \cite{Egidi}.  Assuming there is a pluriclosed metric (notably unrelated to the given metric $h$), we show that the solution to normalized pluriclosed flow with any initial data exists on $[0,\infty)$.  In the case of negative curvature operator we show subconvergence of the flow to a K\"ahler current.  Given this it follows from Demailly-Paun \cite{DemaillyPaun} that $(M^{2n}, J)$ is Fujiki class $\mathcal C$, then we apply a result of Chiose \cite{chiose2014} to conclude that $(M^{2n}, J)$ is in fact K\"ahler.  Knowing that $(M^{2n}, J)$ is K\"ahler, we apply the adaptation of the Wu-Yau result \cite{wuyau2016} to the case of a Hermitian metric with negative curvatures on a K\"ahler background (\cite{TosattiYang2017, DiverioTrapani2019,WuYau2016-1}, building to \cite{yang2019real}) In the case of complex surfaces, pluriclosed metrics always exist by Gauduchon's Theorem (\cite{Gauduchon1form}), and so case $(2)$ must hold.

We note that all of the rigidity results obtained by Ricci flow mentioned above rely on maximum principle arguments which show that the relevant curvature condition is preserved along the flow, and moreover improves rapidly, yielding global existence convergence to a canonical model.  Furthermore, the two recent applications of Hermitian curvature flow mentioned above (\cite{LeeNeg, Ustinovskiy}) are strong maximum principle arguments which only rely on the fact that the flows preserve certain curvature conditions.  Specifically, they are perturbative in nature, and do not require understanding the long time behavior of the flow.  Here the presence of a metric with negative curvature operator is used as a background against which to compare the evolving metric for arbitrary initial data  These estimates are key in establishing the global existence of the pluriclosed flow, which is crucial in obtaining the limiting K\"ahler current, and hence the rigidity of the underlying complex manifold.

\vskip 0.1in
\textbf{Acknowledgements:} The authors would like to thank Saikee Yeung and Jean-Pierre Demailly for helpful comments.

\section{Proofs of Theorems}
\subsection{Reduction of Pluriclosed flow}

As stated in the introduction, the main technical input into the proofs is to establish global existence and weak convergence of the pluriclosed flow on Hermitian backgrounds of nonpositive curvature operator.  To begin we recall basic aspects of this equation.  Given $(M^{2n}, J)$, a Hermitian metric $\gw$ is pluriclosed if
\begin{align*}
d d^c \gw = 0.
\end{align*}
A one-parameter family of pluriclosed metrics $\gw_t$ is a solution of normalized pluriclosed flow \cite{PCF} if
\begin{align} \label{PCF}
\dt \omega =&\ \del \del^*_{g} \gw + \delb \delb^* \gw - \rho_C(\omega) - \gw,
\end{align}
where $\rho_C(\gw)$ denotes the representative of the first Chern class with respect to the Chern connection associated to $\omega$.  Pluriclosed metrics are not locally described by potential functions, rather by potential $(1,0)$-forms.  Thus we cannot reduce (\ref{PCF}) to a scalar PDE, but it is possible to reduce to a PDE for a $(1,0)$ form, and this was carried out in general in \cite{StreetsPCFBI}.  We recall this construction here, with an important modification that we choose a one-parameter family of background tensors which are \emph{not} a priori positive definite.  To that end, given $g_t$ some solution to pluriclosed flow, choose $h$ some background Hermitian metric and consider the one-parameter family of tensors
\begin{align*}
\hat{\gw}_t = e^{-t} \gw_0 - \left(1 - e^{-t} \right) \rho_C(h).
\end{align*}
Though for large $t$ these may not be positive definite, we keep the notation $\hat{\gw}$ for simplicity, and furthermore define
\begin{align} \label{f:backgrndtorsion}
T_g = \del \gw, \qquad \hat{T} = \del \hat{\gw} = e^{-t} \del \gw_0.
\end{align}
Furthermore, let $(\gb_t, f_t)$ be the unique solution of
\begin{gather} \label{f:reducedPCF}
\begin{split}
\dt \gb =&\ \gD_{g_t} \gb - T_{g_t} \circ \delb \gb + \i \tr_{g_{\ga}} \hat{T} - \gb, \qquad \gb_0 = 0,\\
\dt f =&\ \gD_{g_t} f + \tr_{g_t} \hat{g} + \log \frac{\det g_t}{\det h} - f, \qquad f_0 = 0.
\end{split}
\end{gather}
Following the computation of (\cite{StreetsPCFBI} Proposition 3.9), the one-form $\ga_t = \gb_t - \i \del f_t$ satisfies
\begin{align*}
\dt \ga =&\ \gD_{g_t} \ga - T_{g_{\ga}} \circ \delb \ga + \i \tr_{g_t} \hat{T} - \i \del \left( \tr_{g_t} \hat{g} + \log \frac{\det g_t}{\det h} \right) - \ga
\end{align*}
Comparing against (\cite{StreetsPCFBI} Lemma 3.6), we obtain
\begin{align*}
\dt \ga_t = \delb^*_{g_{\ga}} \gw_{\ga} - \frac{\i}{2} \del \log \frac{\det g_{\ga}}{\det h} - \ga,
\end{align*}
where $\omega_{\ga_t} = \hat{\omega}_t + \delb \ga_t + \del \bga_t$.  It follows by an elementary computation using the transgression formula for the first Chern class that $\omega_{\ga_t}$ is the unique solution to normalized pluriclosed flow (\ref{PCF}) with initial condition $\omega_0$.

\subsection{Evolution equations}

\begin{lemma} \label{l:betaev} Let $(M^{2n}, J, h)$ be a Hermitian manifold, suppose $g_t$ is a solution to normalized pluriclosed flow on $M$, and suppose $(\gb_t, f_t)$ is the associated solution of (\ref{f:reducedPCF}).  Then
\begin{gather}
\begin{split}
\left(\dt - \gD \right) \brs{\gb}_{g_t}^2 \leq&\ - \brs{\N \gb}_{g_t}^2 + 2 \Re \IP{\gb, \tr_{g_t} \hat{T}} - \brs{\gb}_{g_t}^2.
\end{split}
\end{gather}
\begin{proof} This follows from the computations of (\cite{StreetsPCFBI} Proposition 4.4) and the estimate of (\cite{ASNDGKCY} Corollary 6.5).
\end{proof}
\end{lemma}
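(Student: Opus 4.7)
The plan is a standard Bochner-type computation. I would begin by writing $\brs{\gb}^2_{g_t} = g_t^{\bj i} \gb_i \overline{\gb_j}$ and differentiating in $t$. This produces three contributions: one from $\dt g^{\bj i}$, which by normalized PCF becomes a Chern-Ricci-type term plus a torsion-torsion term plus a copy of $g^{\bj i}$ itself; one from $\dt \gb$ obtained by substituting the reduced equation (\ref{f:reducedPCF}); and its conjugate. I would then compute $\gD \brs{\gb}^2$ using the Chern Laplacian, which produces the good term $-\brs{\N \gb}^2_{g_t}$ together with curvature and torsion commutator couplings that arise when two covariant derivatives on the $(1,0)$-form $\gb$ are organized into $\gD \gb$.

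Next I would match terms. The $\gD \gb$ piece of $\dt \gb$ cancels the corresponding second-derivative piece of $\gD \brs{\gb}^2$; the $-\gb$ damping yields the $-\brs{\gb}^2_{g_t}$ on the right-hand side; and the forcing term $\i \tr_{g_t} \hat T$, paired with $\bar \gb$ and its conjugate, produces $2 \Real \IP{\gb, \tr_{g_t} \hat T}$ (absorbing the $\i$ into the real-part convention). What remains is a bundle of quadratic curvature and torsion terms with three origins: the evolution of $g$ (which contributes a Chern-Ricci coupling and a torsion-torsion coupling against $\gb \otimes \bar \gb$), the commutators inside $\gD \brs{\gb}^2$, and the explicit torsion transport term $-T_{g_t} \circ \delb \gb$ in (\ref{f:reducedPCF}).

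The main obstacle, and the only reason the estimate works at all, is showing that this residual sum is nonpositive. This is precisely the point at which the pluriclosed hypothesis $d d^c \gw = 0$ is used: it forces a Bianchi-type identity relating the $(3,1)+(1,3)$ components of $\N \Omega$ to derivatives of $T = \del \gw$, which in turn causes the Chern-Ricci-$\gb \bar \gb$ coupling to cancel against the torsion cross-terms. Rather than redo this algebra by hand, I would invoke Proposition 4.4 of \cite{StreetsPCFBI}, which carries out exactly this cancellation in the reduced pluriclosed flow setup, and then apply Corollary 6.5 of \cite{ASNDGKCY} to absorb the additional cross terms generated by the fact that here the background tensor $\hat \gw$ is not assumed positive definite. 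Putting these two inputs together yields the stated inequality with the three terms $-\brs{\N \gb}^2_{g_t}$, $2\Real \IP{\gb, \tr_{g_t} \hat T}$, and $-\brs{\gb}^2_{g_t}$ surviving, and no leftover curvature terms.
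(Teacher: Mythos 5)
Your proposal follows essentially the same route as the paper: the paper's proof consists precisely of appealing to the Bochner-type computation of (\cite{StreetsPCFBI} Proposition 4.4) together with the estimate of (\cite{ASNDGKCY} Corollary 6.5), which is exactly what you invoke for the two nontrivial steps, with your surrounding narrative a reasonable gloss on what those computations do. The only quibble is your description of the role of Corollary 6.5: it is used to control the residual torsion and derivative cross-terms left over from Proposition 4.4 (which is why only the single good term $-\brs{\N\gb}_{g_t}^2$ survives), not to compensate for the possible non-positive-definiteness of $\hat{\gw}$, which plays no role in this pointwise computation since only $\hat{T} = e^{-t}\del\gw_0$ enters.
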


\begin{lemma} \label{l:traceev} Let $(M^{2n}, J, h)$ be a Hermitian  manifold, and suppose $g_t$ is a solution to normalized pluriclosed flow on $M$.  Then
\begin{align*}
\left(\dt - \gD \right) \tr_g h =&\ - \brs{\gU(g,h)}_{g^{-1}, g^{-1}, h}^2 - \IP{h,Q} + \Omega^h(g^{-1}, g^{-1}) + \tr_g h,
\end{align*}
where $Q_{i \bj} = T_{i k \bq} T_{\bj \bl p} g^{\bl k} g^{\bq p} \geq 0$.
\begin{proof} This follows from (\cite{StreetsPCFBI} Lemma 6.2), incorporating the normalization term.
\end{proof}
\end{lemma}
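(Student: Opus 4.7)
The plan is to combine the corresponding computation for unnormalized pluriclosed flow from (\cite{StreetsPCFBI} Lemma 6.2) with the algebraic correction induced by the $-\gw$ normalization term in (\ref{PCF}). Since the Chern Laplacian $\gD$ is determined by $g$ alone, independent of the evolution equation, the correction resides entirely in the $\dt$ piece, and in fact only contributes a single additional scalar term.

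First I would decompose the normalized flow as $\dt g_{i\bj} = F_{i\bj}(g) - g_{i\bj}$, where $F_{i\bj}(g)$ denotes the right-hand side of the unnormalized pluriclosed flow (the first three terms of (\ref{PCF})). Differentiating the scalar $\tr_g h = g^{\bj i} h_{i\bj}$ in time and using the identity $\dt g^{\bj i} = -g^{\bj k}(\dt g_{k\bl}) g^{\bl i}$ then produces the unnormalized $\dt$-expression plus the extra contribution $g^{\bj k} g_{k\bl} g^{\bl i} h_{i\bj} = \tr_g h$.

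Second, I would invoke (\cite{StreetsPCFBI} Lemma 6.2), which establishes
\begin{align*}
\left(\dt - \gD\right) \tr_g h = -\brs{\gU(g,h)}_{g^{-1}, g^{-1}, h}^2 - \IP{h,Q} + \Omega^h(g^{-1}, g^{-1})
\end{align*}
under unnormalized pluriclosed flow. There the Chern curvature $\Omega^h$ appears when one commutes the two covariant derivatives in $\gD \tr_g h$ and converts between the Chern connections $\N^g$ and $\N^h$; the gradient term assembles into a perfect square because $\gU(g,h)$ measures exactly the tensorial difference of these connections; and the $-\IP{h,Q}$ piece emerges from organizing the torsion terms of $g$ that arise through the commutator identities. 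Adding the normalization correction $+\tr_g h$ from the previous paragraph yields the stated formula.

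The main obstacle is not the present lemma but the underlying PCFBI calculation, which requires careful bookkeeping of Chern curvature, Chern-Ricci, and torsion contributions to collapse into the clean expression above. Once that lemma is granted, the passage from unnormalized to normalized flow is purely algebraic and contributes only the extra $+\tr_g h$ term on the right-hand side.
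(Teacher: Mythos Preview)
Your proposal is correct and follows exactly the paper's approach: the paper's proof is the single line ``This follows from (\cite{StreetsPCFBI} Lemma 6.2), incorporating the normalization term,'' and you have simply unpacked what ``incorporating the normalization term'' means, namely that the extra $-g_{i\bj}$ in $\dt g_{i\bj}$ contributes $+\tr_g h$ via $\dt g^{\bj i} = -g^{\bj k}(\dt g_{k\bl})g^{\bl i}$.
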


\subsection{Proofs}

\begin{proof}[Proof of Theorem \ref{t:mainthm}] If item (1) does not hold, then by (\cite{Egidi} Theorem 3.3(3)), $(M^{2n}, J)$ admits pluriclosed metrics.  The first main goal is to show that, with any initial metric, the solution to normalized pluriclosed flow exists for $[0,\infty)$.  Let $h$ denote the given metric satisfying (\ref{f:curvcond}).  Then by applying Lemma \ref{l:traceev} for this choice of $h$, and using that $Q \geq 0$, we obtain
\begin{gather}
\begin{split}
\left(\dt - \gD \right) \tr_g h \leq&\ - \gd \brs{g^{-1}}^2_h + \tr_g h\\
\leq&\ - \frac{\gd}{n} (\tr_g h)^2 + \tr_g h.
\end{split}
\end{gather}
We note that in this step the condition of nonpositive real bisectional curvature will suffice.  Applying the maximum principle, we see that $\tr_g h$ has an upper bound which is uniform in time if $\gd > 0$, and grows exponentially if $\gd = 0$.  Thus there is a uniform lower bound for the metric along the flow at all finite times, and the higher regularity follows from (\cite{StreetsPCFBI}) Theorem 1.8).  Having established the long-time existence, for any curve $C$ in $M$ we compute
\begin{align*}
\frac{d}{dt} \Area_{g_t}(C) =&\ \frac{d}{dt} \int_C \gw_t\\
=&\ - \int_C d d^*_{\gw_t} \gw + \rho_C - \Area_{g_t}(C)\\
=&\ K \cdot C - \Area_{g_t}(C)
\end{align*}
Since the area must remain positive for all finite times, it follows that $K \cdot C \geq 0$.  In case $\gd > 0$, as explained above there is a uniform lower bound for the metric.  This implies that the area of any curve must remain bounded away from zero, and thus $K \cdot C > 0$.

Next we prove that the flow converges subsequentially to a K\"ahler current.  The key estimate is exponential decay of $\beta$.  Combining Lemma \ref{l:betaev} with (\ref{f:backgrndtorsion}) and the uniform lower bound for the metric we obtain
\begin{align*}
\left(\dt - \gD \right) \brs{\gb}^2_{g_t} \leq&\ 2 \brs{\gb}_{g_t} \brs{\tr_{g_t} \hat{T}}_{g_t} - \brs{\gb}_{g_t}^2\\
\leq&\ 2 \brs{\gb}_{g_t} e^{-t} \brs{\tr_{g_t} \del \omega_0}_{g_t} - \brs{\gb}_{g_t}^2\\
\leq&\ C e^{-t} \brs{\gb}_{g_t} (\tr_{g_t} g_0)^{\frac{3}{2}} - \brs{\gb}_{g_t}^2\\
\leq&\ C e^{-t} \brs{\gb}_{g_t} - \brs{\gb}_{g_t}^2.
\end{align*}
From the maximum principle we obtain the a priori estimate
\begin{align} \label{f:betadecay}
\sup_{M \times \{t\}} \brs{\gb}_{g_t}^2 \leq&\ \left( C + \sup_{M \times \{0\}} \brs{\gb}_{g_0}^2 \right) e^{-\tfrac{t}{2}}.
\end{align}
Using this, we first show a uniform integral estimate for the metrics along the flow.  By Gauduchon's theorem \cite{Gauduchon1form} we may choose a background metric $\gw_G$ such that $\i \del \delb \gw_G^{n-1} = 0$.  Using this, integrating by parts, and applying (\ref{f:betadecay}), we estimate
\begin{align*}
\int_{M} \gw_{\ga} \wedge \gw_G^{n-1} =&\ \int_M \left(\hat{\gw} + \delb \gb + \del \bar{\gb} + \i \del\delb f \right) \wedge \gw_G^{n-1}\\
\leq&\ C + \Re \int_M \gb \wedge \delb \gw_G^{n-1}\\
\leq&\ C + \int_M \brs{\gb}_{g_t} \brs{\delb \gw_G^{n-1}}_{g_t} dV_{g_t}\\
\leq&\ C + e^{-\tfrac{t}{4}} \int_M \brs{\delb \gw_G^{n-1}}_{g_t} dV_{g_t}\\
\leq&\ C + C e^{-\tfrac{t}{4}} \int_M \sqrt{\tr_{\gw_G} \gw_{\ga}} \gw_G^n\\
\leq&\ C + C e^{-\tfrac{t}{4}} \left(\int_M \tr_{\gw_G} \gw_\ga \gw_G^n \right)^{\frac{1}{2}}\\
=&\ C + C e^{-\tfrac{t}{4}} \left( 1 + \int_M \gw_{\ga} \wedge \gw_G^{n-1} \right).
\end{align*}
Thus, for sufficiently large $t$ we can absorb the extra term back on the left to obtain a uniform estimate
\begin{align*}
\int_{M} \gw_{\ga} \wedge \gw_G^{n-1} \leq C.
\end{align*}
By the Banach-Alaoglu Theorem (cf. \cite{DemaillyBook} Chapter III Proposition 1.23), for any sequence $\{t_j\} \to \infty$ we can choose a subsequence such that $\gw_{\ga_{t_j}}$ converges weakly to a limiting current $\gw_{\infty}$.  To obtain that $\gw_{\infty}$ is closed, we estimate similarly to the above, 
\begin{align*}
\brs{\int_{M} \gw_{\ga} \wedge d \psi} =&\ \brs{\int_M d \gw_{\ga} \wedge \psi}\\
=&\ \brs{\int_M d \hat{\gw} \wedge \psi + \Re \int_M \del \delb \gb \wedge \psi}\\
\leq&\ C e^{-t} + \brs{\int_M \gb \wedge \del \delb \psi}\\
\leq&\ C e^{-t} + C e^{-\tfrac{t}{4}} \left(1 + \int_M \gw_{\ga} \wedge \gw_G^{n-1} \right)\\
\leq&\ C e^{-\tfrac{t}{4}}.
\end{align*}
Thus the limiting current is closed. It now follows from (\cite{DemaillyPaun} Theorem 0.7) that $(M^{2n}, J)$ is of Fujiki class $\mathcal C$.  However, since $M$ supports a pluriclosed metric, it follows from (\cite{chiose2014} Theorem 0.2) that $M$ is K\"ahler.  Since $(M, J)$ is now K\"ahler, it follows from (\cite{yang2019real} Theorem 1.8) that the canonical bundle is ample.

In the case $n=2$, by Gauduchon's Theorem \cite{Gauduchon1form} we know that there exist pluriclosed metrics on $(M^4, J)$, and so case (2) must hold.

\end{proof}

\bibliographystyle{acm}

\end{document}